\newtheorem{thm}{Theorem}
\theoremstyle{remark}
\newtheorem{rem}{Remark}
\theoremstyle{definition}
\newcommand{\norm}[2]{\left\|\left.{#1}\right|{#2}\right\|}
\newcommand{\R}{\mathbb{R}}
\newcommand{\Rn}{{\mathbb{R}^n}}
\newcommand{\N}{\mathbb{N}}
\newcommand{\ellqp}{{\ell_{q(\cdot)}(L_{p(\cdot)})}}
\newcommand{\Lpq}{L_{p(\cdot)}(\ell_{q(\cdot)})}
\newcommand{\p}{{p(\cdot)}}
\newcommand{\q}{{q(\cdot)}}
\newcommand{\Lpp}{L_\p(\Rn)}
\renewcommand{\P}{\mathcal{P}(\Rn)}
\newcommand{\esssup}{\operatornamewithlimits{ess-sup}}
\newcommand{\essinf}{\operatornamewithlimits{ess-inf}}
\title{A note on the spaces of variable integrability and summability of Almeida and H\"ast\"o}
\author{Henning Kempka\footnote{\sc Mathematical Institute, Friedrich-Schiller-University, D--07737 Jena, Germany;
\emph{E-mail address:} {\tt henning.kempka@uni-jena.de}},
Jan Vyb\'\i ral \footnote{\sc RICAM, Austrian Academy of Sciences, Altenbergstrasse 69, A--4040 Linz, Austria;
\emph{E-mail address:} {\tt jan.vybiral@oeaw.ac.at}}}
\begin{document}

\maketitle

\begin{abstract}
We address an open problem posed recently by Almeida and H\"ast\"o in \cite{AlHa10}.
They defined the spaces $\ellqp$ of variable integrability and summability and showed
that $\|\cdot|\ellqp\|$ is a norm if $q\ge 1$ is constant almost everywhere or if $1/p(x)+1/q(x)\le 1$
for almost every $x\in \R^n$.
Nevertheless, the natural conjecture (expressed also in \cite{AlHa10}) is that the expression
is a norm if $p(x),q(x)\ge 1$ almost everywhere.
We show that $\|\cdot|\ellqp\|$ is a norm, if $1\le q(x)\le p(x)$ for almost every $x\in\R^n.$ 
Furthermore, we construct an example of $p(x)$ and $q(x)$ with $\min(p(x),q(x))\ge 1$ for every $x\in\R^n$ such that
the triangle inequality does not hold for $\|\cdot|\ellqp\|$.
\end{abstract}

{\bf Subj. Class.}: {Primary 46E35}

{\bf Keywords}: Triangle inequality, Lebesgue spaces with variable exponent, iterated Lebesgue spaces

\footnotetext{The first author acknowledges the financial support provided by the
DFG project HA 2794/5-1 "Wavelets and function spaces on domains".}
\footnotetext{The second author acknowledges the financial support provided by the
FWF project Y 432-N15 START-Preis "Sparse Approximation and
Optimization in High Dimensions".}
\footnotetext{We would like to thank the referee for useful hints, which helped to improve the paper.}

\section{Introduction}
For the definition of the spaces $\ellqp$ we follow closely \cite{AlHa10}. Spaces of variable integrability $L_\p$ and variable sequence 
spaces $\ell_\q$ have first been considered in 1931 by Orlicz \cite{Orlicz}
but the modern development started with the paper \cite{KovacikRakosnik}.
We refer to \cite{DHHR} for an excellent overview of the vastly growing literature on the subject.

First of all we recall the definition of the 
variable Lebesgue spaces $L_{\p}(\Omega)$, where $\Omega$ is a measurable subset of $\R^n$.
A measurable function $p:\Omega\to(0,\infty]$ is called a variable exponent function if 
it is bounded away from zero. For a set $A\subset\Omega$ we denote $p_A^+=\esssup_{x\in A}p(x)$ and $p_A^-=\essinf_{x\in A}p(x)$; we 
use the abbreviations $p^+=p_\Omega^+$ and $p^-=p_\Omega^-$. The variable exponent Lebesgue space $L_{p(\cdot)}(\Omega)$ consists of 
all measurable functions $f$ such that there exist an $\lambda>0$ such that the modular
\begin{align*}
\varrho_{L_{p(\cdot)}(\Omega)}(f/\lambda)=\int_{\Omega}\varphi_{p(x)}\left(\frac{|f(x)|}{\lambda}\right)dx
\end{align*}
is finite, where
$$
\varphi_p(t)=\begin{cases}t^p&\text{if}\ p\in(0,\infty),\\
0&\text{if}\ p=\infty\ \text{and}\ t\le 1,\\
\infty&\text{if}\ p=\infty\ \text{and}\ t>1.
\end{cases}
$$
This definition is nowadays standard and was used also in \cite[Section 2.2]{AlHa10} and \cite[Definition 3.2.1]{DHHR}.

If we define $\Omega_\infty=\{x\in\Omega:p(x)=\infty\}$ and $\Omega_0=\Omega\setminus\Omega_\infty$, then the Luxemburg norm of a function $f\in L_{p(\cdot)}(\Omega)$ is given by
\begin{align*}
    \norm{f}{L_{p(\cdot)}(\Omega)}&=\inf\{\lambda>0:\varrho_{L_{p(\cdot)}(\Omega)}(f/\lambda)\leq1\}\\
    &=\inf\left\{\lambda>0:\int_{\Omega_0}\!\!\!\left(\frac{|f(x)|}{\lambda}\right)^{p(x)}\!\!\!\!\!\!dx\leq1\ \text{and}\ |f(x)|\le \lambda \ \text{for a.e.}\ x\in\Omega_\infty\right\}.
\end{align*}
If $p(\cdot)\geq1$, then it is a norm, but it is always a quasi-norm if at least $p^->0$, see \cite{KovacikRakosnik} for details.
We denote the class of all measurable functions $p:\Rn\to(0,\infty]$ such that $p^->0$ by $\P$ and the corresponding modular is denoted by $\varrho_\p$ instead of $\varrho_{L_{p(\cdot)}(\R^n)}$.

To define the mixed spaces $\ellqp$ we have to define another modular. For $p,q\in\P$ and a sequence $(f_\nu)_{\nu\in\N_0}$ of 
$\Lpp$ functions we define
\begin{align*}
\varrho_\ellqp(f_\nu)=\sum_{\nu=0}^\infty \inf\left\{\lambda_\nu>0:\varrho_\p\left(\frac{f_\nu}{\lambda_\nu^{1/q(\cdot)}}\right)\leq1\right\},
\end{align*}
where we put $\lambda^{1/\infty}:=1$. The (quasi-) norm in the $\ellqp$ spaces is defined as usually by 
\begin{align*}
\norm{f_\nu}{\ellqp}=\inf\{\mu>0:\varrho_\ellqp(f_\nu/\mu)\leq1\}.
\end{align*}

This (quasi-) norm was used in \cite{AlHa10} to define the spaces of Besov type with variable integrability and summability. 
Spaces of Triebel-Lizorkin type with variable indices have been considered 
recently in \cite{DHR}. The appropriate $\Lpq$ space is a normed space whenever $\essinf_{x\in\R^n}\min(p(x),q(x))\geq1$.
This was the expected result and coincides with the case of constant exponents.

As pointed out in the remark after Theorem 3.8 in \cite{AlHa10}, the same question is still open
for the $\ellqp$ spaces.

\section{When does $\norm{\cdot}{\ellqp}$ define a norm?}
In Theorem 3.6 of \cite{AlHa10} the authors proved that if the condition $\frac{1}{p(x)}+\frac{1}{q(x)}\leq 1$
holds for almost every $x\in\R^n$, then $\norm{\cdot}{\ellqp}$ defines a norm.
They also proved in Theorem 3.8 that $\norm{\cdot}{\ellqp}$ is a quasi-norm for all $p,q\in\P$. 
Furthermore, the authors of \cite{AlHa10} posed a question if the (rather natural) 
condition $p(x),q(x)\geq 1$ for almost every $x\in\R^n$ ensures that $\norm{\cdot}{\ellqp}$ is a norm.

We give (in Theorem \ref{thm1}) a positive answer if $1\leq q(x)\leq p(x)\leq\infty$ almost everywhere on $\R^n.$
Furthermore in Theorem \ref{thm2}, we construct two functions $\p,\q\in\P$ such that $\inf_{x\in\R^n}\min(p(x),q(x))\ge 1$,
but the triangle inequality does not hold for $\norm{\cdot}{\ellqp}$. 

\subsection{Positive results}

We summarize in the following theorem all the cases when the expression $\norm{\cdot}{\ellqp}$ is known to be a norm.
We include the proof of the case discussed already in \cite{AlHa10} for the sake of completeness.

\begin{thm}\label{thm1}
Let $p,q\in\P$ such that either $p(x)\ge 1$ and $q \ge 1$ is constant almost everywhere,
or $1\leq q(x)\leq p(x)\leq\infty$ for almost every $x\in\R^n$, or
$1/p(x)+1/q(x)\le 1$ for almost every $x\in\R^n$. Then $\norm{\cdot}{\ellqp}$ defines a norm.
\end{thm}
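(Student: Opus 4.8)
The plan is to verify the three defining properties of a norm --- positive homogeneity, positive definiteness, and the triangle inequality --- for the functional $\norm{\cdot}{\ellqp}$. The first two are routine: positive homogeneity $\norm{\alpha f_\nu}{\ellqp}=|\alpha|\cdot\norm{f_\nu}{\ellqp}$ follows directly from the homogeneity of each inner Luxemburg-type expression in $\varrho_\ellqp$ and a standard rescaling of $\mu$; positive definiteness follows from the corresponding property of $\norm{\cdot}{\Lpp}$ together with the observation that $\varrho_\ellqp(f_\nu)=0$ forces every $f_\nu$ to vanish a.e. So the whole content is the triangle inequality, and I would organize the proof by the three cases listed in the statement.

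For the case $1/p(x)+1/q(x)\le 1$ a.e., I would reproduce the argument of \cite[Theorem 3.6]{AlHa10}: the key point is that this condition makes the relevant "interpolation'' inequality available, so that one can estimate $\inf\{\lambda:\varrho_\p(h/\lambda^{1/q(\cdot)})\le 1\}$ for $h=f_\nu+g_\nu$ in terms of the analogous quantities for $f_\nu$ and $g_\nu$ using a pointwise convexity/Hölder-type inequality, after which one sums over $\nu$ and takes infima over the scaling parameters. The case where $q\ge 1$ is a constant is classical: then $\ellqp=\ell_q(L_{p(\cdot)})$ is literally an $\ell_q$-sum of the normed spaces $\Lpp$, and Minkowski's inequality in $\ell_q$ together with the triangle inequality in each $\Lpp$ (valid since $p(\cdot)\ge 1$) gives the result directly; I would just note that the inner expression $\inf\{\lambda_\nu:\varrho_\p(f_\nu/\lambda_\nu^{1/q})\le 1\}$ equals $\norm{f_\nu}{\Lpp}^q$ in this situation.

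The genuinely new case is $1\le q(x)\le p(x)\le\infty$ a.e. Here the natural strategy is to reduce to the previously known cases or to exploit the ordering $q(x)\le p(x)$ via a pointwise substitution. I expect the main obstacle to be precisely the handling of the coupling between the outer $\ell_{q(\cdot)}$-summation and the inner $L_{p(\cdot)}$-integration when both exponents vary; the trick should be that when $q(x)\le p(x)$, the exponent $r(x)$ defined by $1/q(x)=1/p(x)+1/r(x)$ (so $r(x)\ge q(x)$, with $r(x)=\infty$ allowed when $q(x)=p(x)$) satisfies $1/p(x)+1/r(x)=1/q(x)\le 1$, which is not quite the hypothesis of the first case but is in the same spirit, and one can try to write the relevant modular computation as a Hölder-type factorization $\|f_\nu g\|$ with the second factor a fixed function of unit norm, thereby linearizing the $\lambda_\nu^{1/q(\cdot)}$ weights. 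Concretely, I would fix $f_\nu$ and $g_\nu$ with $\norm{f_\nu}{\ellqp}$ and $\norm{g_\nu}{\ellqp}$ controlled, set $\lambda_\nu,\mu_\nu$ to be the (near-)optimal inner parameters for $f_\nu$ and $g_\nu$ respectively, and show $\inf\{\kappa_\nu:\varrho_\p((f_\nu+g_\nu)/\kappa_\nu^{1/q(\cdot)})\le 1\}\le \lambda_\nu+\mu_\nu$ by a pointwise convexity estimate that uses $q(x)\le p(x)$ to pass from the $q$-scaling to a genuine convex combination inside the $p$-power; summing over $\nu$ then closes the argument. The delicate points will be the correct treatment of the sets where $p(x)=\infty$ or $q(x)=\infty$ (using the conventions $\lambda^{1/\infty}=1$ and the definition of $\varphi_\infty$), and making the "near-optimal $\lambda_\nu$'' choices compatible with taking a clean infimum over $\mu$ at the end.
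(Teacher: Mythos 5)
Your treatment of the first two cases is fine: the constant-$q$ case does reduce to Minkowski's inequality in $\ell_q$ over the normed spaces $L_{\p}(\R^n)$ (the paper dismisses it as trivial), and deferring the case $1/p(x)+1/q(x)\le 1$ to \cite[Theorem 3.6]{AlHa10} is legitimate. The problem lies in the concrete plan for the genuinely new case $1\le q(x)\le p(x)\le\infty$. Writing $\theta(h):=\inf\{\lambda>0:\varrho_\p(h/\lambda^{1/q(\cdot)})\le 1\}$, your key claimed step is the subadditivity $\theta(f_\nu+g_\nu)\le\theta(f_\nu)+\theta(g_\nu)$ for (near-)optimal inner parameters, followed by summation over $\nu$. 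This inequality is false even for constant exponents satisfying your hypothesis: for $p=q=2$ one has $\theta(h)=\norm{h}{L_2}^2$, so taking $g_\nu=f_\nu\neq 0$ gives $\theta(2f_\nu)=4\,\theta(f_\nu)>2\,\theta(f_\nu)$. Moreover, even if the modular $\varrho_\ellqp$ were subadditive, that by itself would not yield the triangle inequality for a Luxemburg-type norm; what is needed is a statement that couples the outer scalars $\mu_1,\mu_2$ with the inner parameters, i.e.\ a convexity property, not subadditivity.

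The missing idea is exactly this convexity. Given $\varrho_\ellqp(f_\nu/\mu_1)\le1$ and $\varrho_\ellqp(g_\nu/\mu_2)\le1$, one chooses near-optimal inner parameters $\lambda_\nu,\Lambda_\nu$ for the \emph{normalized} functions $f_\nu/\mu_1$, $g_\nu/\mu_2$ and shows that the weighted average $A_\nu=\frac{\mu_1\lambda_\nu+\mu_2\Lambda_\nu}{\mu_1+\mu_2}$ is admissible for $\frac{f_\nu+g_\nu}{\mu_1+\mu_2}$; since $\sum_\nu A_\nu\le1+\varepsilon$, this gives $\varrho_\ellqp\bigl(\frac{f_\nu+g_\nu}{\mu_1+\mu_2}\bigr)\le1$ and hence the triangle inequality. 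Equivalently, one proves $\theta(\mu_1u+\mu_2v)\le\mu_1\theta(u)+\mu_2\theta(v)$ for $\mu_1+\mu_2=1$, i.e.\ that $\varrho_\ellqp$ is a convex modular. The pointwise inequality that makes this work on $\{p(x)<\infty\}$ is a three-factor H\"older inequality with weights $1-\frac{1}{q(x)}$, $\frac{1}{q(x)}-\frac{1}{p(x)}$, $\frac{1}{p(x)}$ (this is where your exponent $r(x)$ with $1/q=1/p+1/r$ really enters, so your factorization instinct can be completed, but it must be aimed at convexity rather than subadditivity), while on $\{p(x)=\infty\}$ one only needs the scalar estimate $\frac{\mu_1\lambda_\nu^{1/q(x)}+\mu_2\Lambda_\nu^{1/q(x)}}{\mu_1+\mu_2}\le A_\nu^{1/q(x)}$. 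Alternatively, for $q(x)\le p(x)$ one can use the identity $\varphi_{p(x)}\bigl(t/\lambda^{1/q(x)}\bigr)=\varphi_{p(x)/q(x)}\bigl(\varphi_{q(x)}(t)/\lambda\bigr)$ to see that $\theta(f)=\bigl\|\varphi_{q(\cdot)}(|f|)\bigr\|_{p(\cdot)/q(\cdot)}$ is a composition of convex maps, hence convex; this shortcut, however, does not cover the case $1/p+1/q\le1$, which is why the paper runs the H\"older-based argument for both cases in a unified way.
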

\begin{proof}
If $p(x)\ge 1$ and $q \ge 1$ is constant almost everywhere, then the proof is trivial.

In the remaining cases, we want to show that
\begin{equation*}
\|f_\nu+g_\nu|\ellqp\|\le \|f_\nu|\ellqp\|+\|g_\nu|\ellqp\|
\end{equation*}
for all sequences of measurable functions $\{f_\nu\}_{\nu\in\N_0}$ and $\{g_\nu\}_{\nu\in\N_0}$.
Let $\mu_1>0$ and $\mu_2>0$ be given with
$$
\varrho_\ellqp\left(\frac{f_\nu}{\mu_1}\right)\le 1\quad \text{and}\quad
\varrho_\ellqp\left(\frac{g_\nu}{\mu_2}\right)\le 1.
$$
We want to show that
$$
\varrho_\ellqp\left(\frac{f_\nu+g_\nu}{\mu_1+\mu_2}\right)\le 1.
$$
For every $\varepsilon>0$, there exist sequences of positive numbers $\{\lambda_\nu\}_{\nu\in\N_0}$
and $\{\Lambda_\nu\}_{\nu\in\N_0}$ such that
\begin{equation}\label{eq:have1}
\varrho_\p\left(\frac{f_\nu(x)}{\mu_1\lambda_{\nu}^{1/q(x)}}\right)\le 1\quad \text{and}\quad
\varrho_\p\left(\frac{g_\nu(x)}{\mu_2\Lambda_{\nu}^{1/q(x)}}\right)\le 1
\end{equation}
together with
$$
\sum_{\nu=0}^\infty\lambda_\nu\le 1+\varepsilon\quad\text{and}\quad \sum_{\nu=0}^\infty\Lambda_\nu\le 1+\varepsilon.
$$
We set
$$
A_\nu:=\frac{\mu_1\lambda_\nu+\mu_2\Lambda_\nu}{\mu_1+\mu_2}, \quad \text{i.e.}\quad \sum_{\nu=0}^\infty A_\nu\le 1+\varepsilon.
$$
We shall prove that
\begin{equation}\label{eq:todo}
\varrho_{\p}\left(\frac{f_\nu(x)+g_\nu(x)}{A_\nu^{1/q(x)}(\mu_1+\mu_2)}\right)\le 1\quad\text{for all}\quad \nu\in\N_0.
\end{equation}
Let $\Omega_0:=\{x\in\R^n: p(x)<\infty\}$ and $\Omega_\infty:=\{x\in\R^n:p(x)=\infty\}$. We put for every $x\in\Omega_0$
$$
F_\nu(x):=\left(\frac{|f_\nu(x)|}{\mu_1\lambda_\nu^{1/q(x)}}\right)^{p(x)}\quad\text{and}\quad
G_\nu(x):=\left(\frac{|g_\nu(x)|}{\mu_2\Lambda_\nu^{1/q(x)}}\right)^{p(x)}.
$$
Then \eqref{eq:have1} may be reformulated as
\begin{equation}\label{eq:have11}
\int_{\Omega_0}F_\nu(x)dx\le 1\quad \text{and}\quad \esssup_{x\in\Omega_\infty}\frac{|f_\nu(x)|}{\mu_1\lambda_\nu^{1/q(x)}}\le 1
\end{equation}
and
\begin{equation}\label{eq:have12}
\int_{\Omega_0}G_\nu(x)dx\le 1\quad\text{and}\quad \esssup_{x\in\Omega_\infty}\frac{|g_\nu(x)|}{\mu_2\Lambda_\nu^{1/q(x)}}\le 1\ .
\end{equation}
Our aim is to prove \eqref{eq:todo}, which reads
\begin{equation}\label{eq:todo2}
\int_{\Omega_0}\left(\frac{|f_\nu(x)+g_\nu(x)|}{A_\nu^{1/q(x)}(\mu_1+\mu_2)}\right)^{p(x)}dx\le 1\quad\text{and}\quad \esssup_{x\in\Omega_\infty}
\frac{|f_\nu(x)+g_\nu(x)|}{A_\nu^{1/q(x)}(\mu_1+\mu_2)}\le 1.
\end{equation}
We first prove the second part of \eqref{eq:todo2}. First we observe that \eqref{eq:have11} and \eqref{eq:have12}
imply
$$
|f_\nu(x)|\le \mu_1\lambda_\nu^{1/q(x)}\quad \text{and}\quad |g_\nu(x)|\le \mu_2\Lambda_\nu^{1/q(x)}
$$
holds for almost every $x\in\Omega_\infty.$
Using $q(x)\ge 1$, and H\"older's inequality in the form
$$
\frac{\mu_1\lambda_\nu^{1/q(x)}+\mu_2\Lambda_\nu^{1/q(x)}}{\mu_1+\mu_2}\le \left(\frac{\mu_1\lambda_\nu+\mu_2\Lambda_\nu}{\mu_1+\mu_2}\right)^{1/q(x)},
$$
we get
$$
\frac{|f_\nu(x)+g_\nu(x)|}{A_\nu^{1/q(x)}(\mu_1+\mu_2)} \le 1.
$$
If $q(x)=\infty$, only notational changes are necessary.

Next we prove the first part of \eqref{eq:todo2}. Let $1\le q(x)\le p(x)<\infty$ for almost all $x\in\Omega_0$. Then we use H\"older's inequality in the form
\begin{align}\label{eq:Hol1}
F&_\nu(x)^{1/p(x)}\lambda_\nu^{1/q(x)}\mu_1+G_\nu(x)^{1/p(x)}\Lambda_\nu^{1/q(x)}\mu_2\\
&\notag \le(\mu_1+\mu_2)^{1-1/q(x)}
(\mu_1\lambda_\nu+\mu_2\Lambda_\nu)^{1/q(x)-1/p(x)}
(F_\nu(x)\lambda_\nu\mu_1+G_\nu(x)\Lambda_\nu\mu_2)^{1/p(x)}.
\end{align}
If $1/p(x)+1/q(x)\le 1$ for almost every $x\in\Omega_0$, then we replace \eqref{eq:Hol1} by
\begin{align}\label{eq:Hol2}
F&_\nu(x)^{1/p(x)}\lambda_\nu^{1/q(x)}\mu_1+G_\nu(x)^{1/p(x)}\Lambda_\nu^{1/q(x)}\mu_2\\
\notag&\le(\mu_1+\mu_2)^{1-1/p(x)-1/q(x)}
(\mu_1\lambda_\nu+\mu_2\Lambda_\nu)^{1/q(x)}
(F_\nu(x)\mu_1+G_\nu(x)\mu_2)^{1/p(x)}.
\end{align}
Using \eqref{eq:Hol1}, we may further continue
\begin{align*}
\int_{\Omega_0}&\left(\frac{|f_\nu(x)+g_\nu(x)|}{A_\nu^{1/q(x)}(\mu_1+\mu_2)}\right)^{p(x)}dx\\
&\hspace{-2em}= \int_{\Omega_0} \left(\frac{F_\nu(x)^{1/p(x)}\lambda_\nu^{1/q(x)}\mu_1+G_\nu(x)^{1/p(x)}\Lambda_\nu^{1/q(x)}\mu_2}{\mu_1+\mu_2}\right)^{p(x)}
 \!\!\!\!\cdot \left(\frac{\mu_1\lambda_\nu+\mu_2\Lambda_\nu}{\mu_1+\mu_2}\right)^{-\frac{p(x)}{q(x)}}\!\!\!dx\\
&\hspace{-2em}\le \int_{\Omega_0} \frac{F_\nu(x)\lambda_\nu\mu_1+G_\nu(x)\Lambda_\nu\mu_2}{\mu_1\lambda_\nu+\mu_2\Lambda_\nu}dx\\
&\hspace{-2em}= \frac{\mu_1\lambda_\nu}{\mu_1\lambda_\nu+\mu_2\Lambda_\nu}\int_{\Omega_0} F_\nu(x)dx
+ \frac{\mu_2\Lambda_\nu}{\mu_1\lambda_\nu+\mu_2\Lambda_\nu}\int_{\Omega_0} G_\nu(x)dx\le 1,
\end{align*}
where we used also \eqref{eq:have11} and \eqref{eq:have12}.
If we start with \eqref{eq:Hol2} instead, we proceed in the following way
\begin{align*}
\int_{\Omega_0}&\left(\frac{|f_\nu(x)+g_\nu(x)|}{A_\nu^{1/q(x)}(\mu_1+\mu_2)}\right)^{p(x)}dx\\
&\hspace{-2em}= \int_{\Omega_0} \left(\frac{F_\nu(x)^{1/p(x)}\lambda_\nu^{1/q(x)}\mu_1+G_\nu(x)^{1/p(x)}\Lambda_\nu^{1/q(x)}\mu_2}{\mu_1+\mu_2}\right)^{p(x)}
 \!\!\!\!\cdot \left(\frac{\mu_1\lambda_\nu+\mu_2\Lambda_\nu}{\mu_1+\mu_2}\right)^{-\frac{p(x)}{q(x)}}\!\!\!dx\\
&\hspace{-2em}\le \int_{\Omega_0} \frac{F_\nu(x)\mu_1+G_\nu(x)\mu_2}{\mu_1+\mu_2}dx= \frac{\mu_1}{\mu_1+\mu_2}\int_{\Omega_0} F_\nu(x)dx
+ \frac{\mu_2}{\mu_1+\mu_2}\int_{\Omega_0} G_\nu(x)dx\le 1.
\end{align*}
In both cases, this finishes the proof of \eqref{eq:todo2}.
\end{proof}
\begin{rem}
\begin{enumerate}
\item[(i)] A simpler proof of Theorem \ref{thm1} is possible (and was proposed to us by the referee) if $1\le q(x)\le p(x) \le \infty$.
Namely, if $1\le q\le p\le \infty, \lambda>0$ and $t\ge 0$, then
\begin{equation}\label{eq:eqq1}
\varphi_p\left(\frac{t}{\lambda^{1/q}}\right)=\varphi_{\frac{p}{q}}\left(\frac{\varphi_q(t)}{\lambda}\right),
\end{equation}
where we use the convention that $\frac{p}{q}=1$ if $p=q=\infty.$ This allows to simplify the modular $\varrho_{\ellqp}$ to
\begin{equation}\label{eq:eqq2}
\varrho_{\ellqp}(f_\nu)=\sum_{\nu=0}^\infty \left\|\varphi_{q(\cdot)}(|f_\nu|)\right\|_{\frac{p(\cdot)}{q(\cdot)}}.
\end{equation}
This shows that $\varrho_{\ellqp}(f_\nu)$ is a composition of only convex functions. Hence, it is a convex modular
and therefore it induces a norm. Unfortunately, we were not able to find such a simplification for the case $1/p(x)+1/q(x)\le 1$.
The advantage of our proof of Theorem \ref{thm1} is that it proves both the cases in a unified way.
\item[(ii)] Let us observe that \eqref{eq:eqq1} loses its sense if $p<q=\infty$. This shows, why \eqref{eq:eqq2}
(which was already used in \cite{AlHa10} for $q^+<\infty$) has to be applied with certain care.
\item[(iii)] The method of the proof of Theorem \ref{thm1} can be actually used to show that under the conditions
posed on $\p$ and $\q$ in Theorem \ref{thm1},  $\varrho_\ellqp$ is a convex modular,
which is a stronger result than the norm property. 
\end{enumerate}
\end{rem}

\subsection{Counterexample}
\begin{thm}\label{thm2}
There exist functions $p,q\in\P$ with $\inf_{x\in\R^n}p(x)\ge 1$ and $\inf_{x\in\R^n}q(x)\ge 1$ such that
$\|\cdot|\ellqp\|$ does not satisfy the triangle inequality.
\end{thm}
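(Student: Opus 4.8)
The plan is to take $\p$ and $\q$ piecewise constant on two disjoint balls $B_1,B_2\subset\Rn$ of Lebesgue measure one (with the rest of $\Rn$ irrelevant), and to test the triangle inequality on two explicit sequences that are constant on each $B_i$ and supported on the levels $\nu\in\{0,1\}$; this reduces the whole question to an elementary computation in the plane. If $(p_i,q_i)$ denotes the value of $(\p,\q)$ on $B_i$ and $h=u\chi_{B_1}+v\chi_{B_2}$ with $u,v\ge0$, then $\varrho_\p\big(h/\lambda^{1/\q}\big)=u^{p_1}\lambda^{-p_1/q_1}+v^{p_2}\lambda^{-p_2/q_2}$, so the inner infimum in $\varrho_\ellqp$ becomes the explicit function
\[
b(u,v):=\inf\{\lambda>0:\ u^{p_1}\lambda^{-p_1/q_1}+v^{p_2}\lambda^{-p_2/q_2}\le1\}.
\]
Identifying a simple function $u\chi_{B_1}+v\chi_{B_2}$ with the pair $(u,v)$, a sequence $f=(f_\nu)$ of such functions supported on $\nu\in\{0,1\}$ satisfies $\varrho_\ellqp(f)=b(f_0)+b(f_1)$ and $\norm{f}{\ellqp}=\inf\{\mu>0:\ b(f_0/\mu)+b(f_1/\mu)\le1\}$.

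I would choose $p_1=q_1=1$ and $p_2=1,\ q_2=\infty$, so that $\inf_x p(x)=\inf_x q(x)=1$ while $b(u,v)=\frac{u}{1-v}$ for $0\le v<1$ (and $b(u,v)=\infty$ if $v\ge1$, $u>0$). This choice lies outside \emph{both} sufficient conditions of Theorem~\ref{thm1}: $\q\le\p$ fails on $B_2$, and $1/\p+1/\q\le1$ fails on $B_1$. The structural reason a counterexample can live here is that, although $h\mapsto b(h)$ is quasi-convex — for fixed $\lambda$ the map $h\mapsto\varrho_\p(h/\lambda^{1/\q})$ is convex, so each sublevel set $\{b\le\lambda\}$ is convex — the function $b$ is \emph{not} convex once $\q$ is non-constant: $b(u,v)=\frac u{1-v}$ has Hessian determinant $-(1-v)^{-4}<0$. (For constant $\q\ge1$ one has instead $b(f_\nu)=\norm{f_\nu}{\Lpp}^q$, or a $\{0,\infty\}$-indicator of a convex set when $q=\infty$, which is convex in $f_\nu$; hence $\varrho_\ellqp$ is a convex modular and induces a norm, in agreement with the first case of Theorem~\ref{thm1} and the Remark following it.)

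I would then amplify a single non-convex chord of $b$ across the two levels by a swap. Choose pairs $x,x'$ with $b(x)+b(x')=1$ but $b\big(\tfrac{x+x'}{2}\big)$ well above $\tfrac12$; concretely $x=(\tfrac18,\tfrac12)$, $x'=(\tfrac34,0)$, for which $b(x)=\tfrac14$, $b(x')=\tfrac34$ and $b\big(\tfrac{x+x'}{2}\big)=b\big(\tfrac7{16},\tfrac14\big)=\tfrac7{12}$. Take $f$ with $f_0=x$, $f_1=x'$ and $g$ with $g_0=x'$, $g_1=x$ (all other $f_\nu,g_\nu$ equal to $0$). Then $\varrho_\ellqp(f)=\varrho_\ellqp(g)=b(x)+b(x')=1$, and since $\mu\mapsto\varrho_\ellqp(f/\mu)$ is strictly decreasing near $\mu=1$ a short check gives $\norm{f}{\ellqp}=\norm{g}{\ellqp}=1$. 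On the other hand $\tfrac12(f+g)$ carries the \emph{same} value $\tfrac{x+x'}{2}$ on both levels, whence
\[
\varrho_\ellqp\!\Big(\tfrac{f+g}{2}\Big)=2\,b\Big(\tfrac{x+x'}{2}\Big)=\tfrac76>1,
\]
so $\norm{f+g}{\ellqp}>2=\norm{f}{\ellqp}+\norm{g}{\ellqp}$; in fact a direct computation gives $\norm{f+g}{\ellqp}=\tfrac94$.

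The step I expect to be the real obstacle is finding a configuration that genuinely violates the inequality. For constant exponents $\ell_q(L_p)$ is a true norm, and by Theorem~\ref{thm1} the triangle inequality also holds on $\{\q\le\p\}$ and on $\{1/\p+1/\q\le1\}$; worse, the obvious candidates — sequences of uniform magnitude, or sequences concentrated on a single ball per level — do not yield a strict violation even in the ``gap'' regime, because they fail to probe a non-convex chord of $b$ (its negative-curvature directions require a large change in the $B_1$-value). The observations that unlock the example are: the failure must be forced by the variation of $\q$; the inner functional $b$ is only quasi-convex, not convex, precisely between the two sufficient conditions; and one non-convex chord of $b$ can be converted into a triangle-inequality violation by putting complementary data on two levels and swapping. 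A minor technical nuisance is the convention $\q=\infty$; it can be avoided, at the price of heavier algebra, by using $p_1=q_1=p_2=1$, $q_2=2$, where $b(u,v)=\big(\tfrac{v+\sqrt{v^2+4u}}{2}\big)^2$ is still non-convex and the same scheme goes through.
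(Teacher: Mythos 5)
Your construction is correct and is essentially the paper's own: the same exponents $p\equiv 1$, $q=1$ on one unit-measure set and $q=\infty$ on the other, with the same two-level swap trick; the paper simply tests with the indicator pairs $(1,0)$ and $(0,1)$ (getting $\norm{f+g}{\ellqp}=3$) instead of your $(\tfrac18,\tfrac12)$ and $(\tfrac34,0)$ (getting $\tfrac94$). All your computations check out.
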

\begin{proof}
Let $Q_0,Q_1\subset \R^n$ be two disjoint unit cubes, let $p(x):=1$ everywhere on $\R^n$
and put $q(x):=\infty$ for $x\in Q_1$ and $q(x):=1$ for $x\not\in Q_1$.
Let $f_1=\chi_{Q_0}$ and $f_2=\chi_{Q_1}$. Finally, we put $f=(f_1,f_2,0,\dots)$ and $g=(f_2,f_1,0,\dots)$.

We calculate for every $L>0$ fixed
$$
\inf\left\{\lambda_1>0: \varrho_{\p}\left(\frac{f_1(x)}{\lambda_1^{1/q(x)}L}\right)\le 1\right\}
=\inf\left\{\lambda_1>0: \frac{1}{\lambda_1 L}\le 1\right\}=1/L
$$
and
$$
\inf\left\{\lambda_2>0: \varrho_{\p}\left(\frac{f_2(x)}{\lambda_2^{1/q(x)}L}\right)\le 1\right\}
=\inf\left\{\lambda_2>0: \frac{1}{L}\le 1\right\}.
$$
If $L\ge 1$, then the last expression is equal to zero, otherwise it is equal to $\infty$.

We obtain
\begin{align*}
\|f|\ellqp\|=\inf\{L>0:\varrho_\ellqp(f/L)\le 1\}= \inf\{L>0:1/L+0\le 1\}=1
\end{align*}
and the same is true also for $\|g|\ellqp\|$. It is therefore enough to show that $\|f+g|\ellqp\|>2$.

Using the calculation
\begin{align*}
\inf&\left\{\lambda>0:\varrho_{\p}\left(\frac{f_1(x)+f_2(x)}{L\cdot\lambda^{1/q(x)}}\right)\le 1\right\}
=\inf\left\{\lambda>0:\int_{Q_0}\frac{1}{L\cdot\lambda}+\int_{Q_1}\frac{1}{L}\le 1\right\}\\
&\qquad =\inf\left\{\lambda>0:\frac{1}{L\cdot\lambda}+\frac{1}{L}\le 1\right\}=\frac{1}{L-1},
\end{align*}
which holds for every $L>1$ fixed, we get
\begin{align*}
\|f+g|\ellqp\|&=\inf\left\{L>0:\varrho_\ellqp\left(\frac{f+g}{L}\right)\le 1\right\}\\
&=\inf\left\{L>0:2\inf\left\{\lambda>0:\varrho_{\p}\left(\frac{f_1(x)+f_2(x)}{L\cdot\lambda^{1/q(x)}}\right)\le 1\right\}\le 1\right\}\\
&=\inf\left\{L>1:2\cdot\frac{1}{L-1}\le 1\right\}=3.
\end{align*}
\end{proof}

\begin{rem}
Let us observe that $1\le q(x)\le p(x)\le \infty$ holds for $x\in Q_0$ and $1/p(x)+1/q(x)\le 1$ is true for $x\in Q_1$.
It is therefore necessary to interpret the assumptions of Theorem \ref{thm1} in a correct way,
namely that one of the conditions of Theorem \ref{thm1} holds for (almost) all $x\in\R^n$. This is not
to be confused with the statement that for (almost) every $x\in\R^n$ at least one of the conditions
is satisfied, which is not sufficient.
\end{rem}

\begin{rem}
A similar calculation (which we shall not repeat in detail) shows that one may also put $q(x):=q_0$ large enough for $x\in Q_1$
to obtain a counterexample. Hence there is nothing special about the infinite value of $q$ and the same counterexample
may be reproduced with uniformly bounded exponents $p,q\in\P$.
\end{rem}

\bibliographystyle{amsplain}

\end{document}